\newtheorem{thm}{Theorem}[section]
\newtheorem{cor}[thm]{Corollary}
\newtheorem{lem}[thm]{Lemma}
\newcommand{\FSSZ} {{\mathbb Z}}
\begin{document}

\bibliographystyle{plain}

\title* 
 {On  Sylvester Equations in Banach Subalgebras}

\author{Qiquan Fang, Chang Eon Shin and Qiyu Sun}

\institute{Qiquan Fang \at Department of Mathematics, Zhejiang University of Science and Technology, Hangzhou, Zhejiang 310023, China,
\email{qiquanfang@163.com} \and Chang Eon Shin \at Department of Mathematics, Sogang University,  Seoul 04107, South Korea, \email{shinc@sogang.ac.kr}
\and
Qiyu Sun\at Department of Mathematics, University of Central Florida, Orlando, FL 32816, USA,
\email{qiyu.sun@ucf.edu}
}






\maketitle

\abstract
{ Let  ${\mathcal B}$ be a Banach algebra and ${\mathcal A}$ be a  Banach subalgebra that admits  norm-controlled inversion in ${\mathcal B}$.
In this work, we take $A, B$ in the Banach subalgebra ${\mathcal A}$
with their spectra in the Banach algebra ${\mathcal B}$ being disjoint, and show that the  operator
Sylvester equation
$ BX-XA=Q$ has a  unique solution $X\in {\mathcal A}$ for every $Q\in {\mathcal A}$.
Under the additional assumptions that ${\mathcal B}$ is the operator algebra ${\mathcal B}(H)$ on a  Hilbert space $H$ and that
  $A$ and $B$ are normal in ${\mathcal B}(H)$,  an explicit norm estimate for  the solution  $X$ of the above operator Sylvester equation is provided in this work.
In addition,   the above conclusion  on norm control is applied to Banach subalgebras of localized infinite matrices and integral operators.
}

\section{Introduction}

Let $m,n\ge 1$ and take matrices $A \in {\mathbb{R}}^{n\times n}$ and $B \in {\mathbb{R}}^{m \times m}$. It is well known that the Sylvester equation
\begin{equation}\label{Sylvester}
BX-XA=Q
\end{equation}
has a unique solution $X\in {\mathbb R}^{m\times n}$  for every $Q \in {\mathbb R}^{m\times n}$ if and only if  $A$ and $B$ have no common eigenvalues \cite{
 Lan, 
 Trent}.
 For the case that both $A={\rm diag}(\lambda_1, \ldots, \lambda_n)$ and $B={\rm diag}(\mu_1, \ldots, \mu_m)$ are diagonal matrices with no common eigenvalues,
one may verify that the solution of the above matrix  Sylvester equation \eqref{Sylvester} is given by
$$ x_{ij}=\frac{q_{ij}} {\mu_i-\lambda_j}\ \ {\rm for\  all} \ 1\le i\le m\ {\rm and} \ 1\le j\le n, $$
where $X=(x_{ij})_{1\le i\le m, 1\le j\le n}$ and $Q=(q_{ij})_{1\le i\le m, 1\le j\le n}$, see \cite[Theorem 3.1]{Sorensen2002} for the
explicit  expression of the solution $X$  
 when $A$ and $B$ are diagonalizable.

The Sylvester equation \eqref{Sylvester} appears in   block diagonalization of  matrices.
Roth's theorem states that  matrices
$\left(\begin{array}{cc} B & Q\\ 0 & A\end{array}\right)$ and
$\left(\begin{array}{cc} B & 0\\ 0 & A\end{array}\right)$
are similar if and only if the Sylvester equation
\eqref{Sylvester} has a solution \cite{roth1952}.  In particular, we have
$$
\left(\begin{array}{cc} B & Q\\ 0 & A\end{array}\right)  =  \left(\begin{array}{cc} I & X\\ 0 & I\end{array}\right)^{-1}\left(\begin{array}{cc} B & 0\\ 0 & A\end{array}\right)  \left(\begin{array}{cc} I & X\\ 0 & I\end{array}\right),$$
where $I$ is the identity matrix of appropriate size.

 The  Sylvester equation \eqref{Sylvester} with assuming $m=n$ and replacing $B$ by $-A^T$  becomes the Lyapunov equation
\begin{equation}\label{Lyapunov}
A^TX+XA+Q=0.
\end{equation}
For the Lyapunov equation \eqref{Lyapunov}, there exists a unique positive definite solution $X$
when  $-A$ and $Q$ are symmetric and positive definite (hence $-A^T$ and $A$ have no common eigenvalues) \cite{barnett70, bunce85, lyapunov, MS2017}.
 In that scenario,
the linear dynamical system
\begin{equation}\label{dynamic.def}
\dot z(t) =Az(t)\end{equation}
is  stable in the sense that $\lim_{t\to +\infty} z(t)=0$.
Define
 the  Lyapunov  function  by
  $$V(z(t))=z(t)^T X z(t),$$
  where $X$ is the  positive definite solution  of the Lyapunov equation \eqref{Lyapunov} with $Q$ replaced by the identity $I$.
 Then it follows from   \eqref{Lyapunov}, \eqref{dynamic.def} and
the positive-definiteness of the matrix $X$ that
 the quadratic function  $V(z)$ satisfies
 $$
 \dot V(z(t))= z(t)^T (A^TX+XA)z(t)=-z(t)^T z(t)\le -\tau V(z(t)),
 $$
where $\tau>0$ is an absolute constant.
Solving the above   differential inequality and using the positive-definiteness of the matrix $X$, we can find a positive constant $C$ such that
\begin{equation}
0\le  z(t)^T z(t)\le C V(z(t)) \le C e^{-\tau t} \ \  {\rm for \ all}\ \ t\ge 0.
\end{equation}
This  proves the 
(exponential) stability of the dynamical system \eqref{dynamic.def}.

\smallskip

Let ${\mathcal B}$ be a Banach algebra. Given $A$ and $B\in {\mathcal B}$, we define
the  Sylvester operator $T_{A, B}$ on ${\mathcal B}$ by
\begin{equation}\label{operator}
T_{A, B}(X)=BX-XA,\quad X \in {\mathcal B}.
\end{equation}
  As the family of all matrices
 $A \in {\mathbb R}^{n\times n}$  forms a Banach algebra, the
Sylvester equation
\eqref{Sylvester}  has been extended in the Banach algebra setting.
 Sylvester-Rosenblum theorem states that the operator Sylvester equation
\begin{equation}\label{sylvester.eq2}
T_{A, B}(X)=Q
\end{equation}
has a unique solution in ${\mathcal B}$ for every $Q \in {\mathcal B}$ (hence
the homogenous Sylvester equation $BX=XA$ has zero as its unique solution) if the spectra $\sigma_{\mathcal B} (A)$ and
$\sigma_{\mathcal B} (B)$
of $A$ and $B$ in ${\mathcal B}$ are disjoint  \cite{Bhatia1997, Rosenblum, sasane2021}.

Let $H$ be  a complex Hilbert space  and  denote   the $C^\star$-algebra of all bounded linear operators on $H$ by
${\mathcal B}(H)$.  We say that  a linear  subspace  $M$ of the Hilbert space $H$  is {\em invariant} under the operator $A\in {\mathcal B}(H)$
if $Ax\in M$ for every $x\in M$, and   {\em hyperinvariant} 
if it is invariant under every operator  $B\in {\mathcal B}(H)$ which commutes with $A$.
One of  the most famous problems in functional analysis is whether every operator on an infinite-dimensional
Hilbert space have a non-trivial invariant subspace.
The Sylvester-Rosenblum theorem provides a sufficient condition on $A\in {\mathcal B}(H)$ so that its invariant space  $M$
is also hyperinvariant. Given $A\in {\mathcal B}(H)$, its invariant subspace $M\subset H$ can be described by
\begin{equation} \label{projection.eq} (I-P) AP=0,\end{equation}
where  $P$ is  the projection operator from $H$  onto $M$.
For any $B\in {\mathcal B}(H)$ commutating with $A$, we have
\begin{align} \label{commutative.eq0}
& ((I-P)A(I-P)+\mu_1P) (I-P)B P \nonumber\\
 = &\  (I-P) A (I-P) BP  = (I-P) ABP\nonumber\\
 = &\  (I-P)B AP  =(I-P)B PAP \nonumber\\
 = &\  (I-P)BP (PAP+\mu_2 (I-P)),
\end{align}
 where $\mu_1, \mu_2\in {\mathbb C}$  and the second and fourth equalities follow from \eqref{projection.eq}.
 By \eqref{commutative.eq0} with $\mu_1$ and $\mu_2$ appropriately chosen and the  Sylvester-Rosenblum theorem for  homogenous operator Sylvester equations, we conclude that
$(I-P)BP=0$ and hence $M$ is hyperinvariant, provided that
the spectra of restrictions of the operator $A$ onto the invariant subspace $M$ and its orthogonal complement  $M^\perp$ are disjoint  \cite{Bhatia1997, radjavl1971}.

\medskip
 The Sylvester-Rosenblum theorem
on the operator Sylvester equation \eqref{sylvester.eq2} also  plays a crucial role to establish
spectral theorem for normal operators on a Hilbert space, see \cite[Section 6]{Bhatia1997}.
The  Sylvester equation appears  in many mathematical fields and engineering applications, including linear algebra, functional analysis,  ordinary differential equation, control theory, and signal processing. For historical remarks and recent advances on Sylvester equations, the readers may refer to
\cite{bertram2021, Bhatia1997, djordjevic2021, 
sasane2021} and references therein.

\smallskip

In this work, we  solve the operator Sylvester equation
\eqref{sylvester.eq2} in an inverse-closed Banach subalgebra and consider norm control of its unique solution.

\section{Main Results}

 Given a Banach algebra ${\mathcal B}$,  we say that its subalgebra ${\mathcal A}$  sharing the same identity $I$ with ${\mathcal B}$  is  {\em inverse-closed} in ${\mathcal B}$  if
 any element in ${\mathcal A}$ that is invertible in ${\mathcal B}$ is also invertible in ${\mathcal A}$.
 Inverse-closedness
 has numerous applications in time–frequency analysis, sampling theory, numerical analysis and optimization. It has been established for infinite matrices, integral operators, and pseudo-differential operators satisfying various off-diagonal decay conditions, see the survey papers
 \cite{grochenig10, Krishtal11, shinsun13} for historical remarks and  \cite{fang21, grochenig2024, MS2017, samei19, shinsun17} and references therein for recent advances.
For an inverse-closed subalgebra ${\mathcal A}$ of a Banach algebra ${\mathcal B}$, it is known that  for any  element $A\in {\mathcal A}$, its spectra in  the algebras ${\mathcal A}$ and
${\mathcal B}$ are the same.  Therefore  by the  Sylvester-Rosenblum theorem, 
 we have the following result on solving the operator Sylvester equation \eqref{sylvester.eq2}  in an inverse-closed subalgebra ${\mathcal A}$.

 \begin{thm}\label{maintheorem0} Let ${\mathcal B}$ be a Banach algebra and  ${\mathcal A}$ be its inverse-closed Banach  subalgebra of ${\mathcal B}$.
If $A, B, Q \in {\mathcal A}$ and the spectra of $A$ and $B$ in ${\mathcal B}$ are disjoint, then
 there is a unique solution  $X$ to the operator Sylvester equation \eqref{sylvester.eq2} in ${\mathcal A}$.
\end{thm}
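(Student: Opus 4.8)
The plan is to obtain Theorem~\ref{maintheorem0} as a direct consequence of the Rosenblum--Bhatia result quoted above, applied not to ${\mathcal B}$ but to the subalgebra ${\mathcal A}$ itself, which is a unital Banach algebra in its own right (it contains $I$ by hypothesis). The only fact that has to be fed in is the coincidence of spectra for inverse-closed subalgebras: for every $C\in{\mathcal A}$ one has $\sigma_{\mathcal A}(C)=\sigma_{\mathcal B}(C)$. Indeed, $\sigma_{\mathcal B}(C)\subseteq\sigma_{\mathcal A}(C)$ is immediate from ${\mathcal A}\subseteq{\mathcal B}$, and the reverse inclusion is exactly the statement that if $\lambda I-C$ is invertible in ${\mathcal B}$ then it is invertible in ${\mathcal A}$, i.e.\ the definition of inverse-closedness.

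Applying this observation to $C=A$ and $C=B$ gives
$$\sigma_{\mathcal A}(A)\cap\sigma_{\mathcal A}(B)=\sigma_{\mathcal B}(A)\cap\sigma_{\mathcal B}(B)=\emptyset .$$
Since $A,B\in{\mathcal A}$, reading the cited theorem inside the Banach algebra ${\mathcal A}$ then shows that the Sylvester operator $T_{A,B}$ maps ${\mathcal A}$ bijectively onto itself; hence for every $Q\in{\mathcal A}$ the equation $T_{A,B}(X)=Q$ has one and only one solution $X\in{\mathcal A}$. Uniqueness could equally be seen a fortiori, since already in the larger algebra ${\mathcal B}$ the solution is unique.

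An alternative, more hands-on route --- which will also be the starting point for the quantitative estimates in the later sections --- is to use the Rosenblum contour representation
$$X=\frac{1}{2\pi i}\int_{\Gamma}(zI-B)^{-1}\,Q\,(zI-A)^{-1}\,dz,$$
where $\Gamma$ is a finite union of rectifiable Jordan curves enclosing $\sigma_{\mathcal B}(B)$ and contained in the complement of $\sigma_{\mathcal B}(A)$; a direct computation using $B(zI-B)^{-1}=-I+z(zI-B)^{-1}$ and $(zI-A)^{-1}A=-I+z(zI-A)^{-1}$ together with the Cauchy-type identities $\frac{1}{2\pi i}\int_\Gamma (zI-B)^{-1}\,dz=I$ and $\int_\Gamma(zI-A)^{-1}\,dz=0$ verifies that this $X$ solves \eqref{sylvester.eq2}. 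Because $\sigma_{\mathcal A}(A)=\sigma_{\mathcal B}(A)$ and $\sigma_{\mathcal A}(B)=\sigma_{\mathcal B}(B)$, every $z\in\Gamma$ avoids both spectra in ${\mathcal A}$, so $(zI-A)^{-1}$ and $(zI-B)^{-1}$ lie in ${\mathcal A}$ and vary continuously in the norm of ${\mathcal A}$ as $z$ runs over the compact set $\Gamma$; the integrand is then a continuous ${\mathcal A}$-valued function and its integral lies in the closed subalgebra ${\mathcal A}$.

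I do not expect any real obstacle in this argument: its entire content is the standard equality of spectra in inverse-closed subalgebras plus the remark that Rosenblum's construction is intrinsic to whichever unital Banach algebra one works in. The genuinely hard part is not addressed here at all, namely producing a quantitative bound on $\|X\|_{\mathcal A}$ in terms of $\|Q\|_{\mathcal A}$ and the spectral separation of $A$ and $B$; that is where the norm-controlled inversion hypothesis on ${\mathcal A}$ enters, and it is the subject of the sections that follow.
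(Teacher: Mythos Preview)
Your argument is correct and is exactly the reasoning the paper has in mind: Theorem~\ref{maintheorem0} is stated there without a separate proof, as an immediate consequence of the spectral coincidence $\sigma_{\mathcal A}(\cdot)=\sigma_{\mathcal B}(\cdot)$ for inverse-closed subalgebras together with Rosenblum's theorem applied inside ${\mathcal A}$. Your contour-integral alternative also matches the paper's Lemma~\ref{rosenblum.lem}, the only cosmetic difference being that the paper takes $\Gamma=\partial D$ to enclose $\sigma_{\mathcal B}(A)$ (and writes $(B-zI)^{-1}$) whereas you enclose $\sigma_{\mathcal B}(B)$; both choices yield the same solution and the same proof that $X\in{\mathcal A}$.
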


 A quantitative version of inverse-closedness is  norm-controlled inversion \cite{belinskii97, grochenigklotz13, nikolski99, stafney67}.
Here we say that
an inverse-closed subalgebra ${\mathcal A}$ of a Banach algebra ${\mathcal B}$ admits a {\it norm-controlled inversion}
 if there exists a  nonnegative function
 $h:{\mathbb R}_+ \times {\mathbb R}_+\to {\mathbb R}_+$  that is bounded on any compact subset of ${\mathbb R}_+ \times {\mathbb R}_+$ such that
\begin{equation}\label{controlfunction.def}
\|A^{-1}\|_{\mathcal A}\le h(\|A^{-1}\|_{\mathcal B}, \|A\|_{\mathcal A})
\end{equation}
for all $A\in{\mathcal A}$ that are invertible in  ${\mathcal B}$, where ${\mathbb R}_+$ is the set of all nonnegative real numbers, and $\|\cdot\|_{\mathcal A}$ and $\|\cdot\|_{\mathcal B}$ are norms on Banach algebras ${\mathcal A}$ and
${\mathcal B}$ respectively.
We remark 
that not every inverse-closed Banach subalgebra has norm-controlled inversion. In particular,
 the classical Wiener algebra of
periodic functions with summable Fourier coefficients does not admit a norm-controlled inversion
in the Banach algebra of all  bounded periodic functions.
We also notice that a good estimate for the norm-control function $h$ in \eqref{controlfunction.def} to the inversion is important for some mathematical and engineering applications \cite{cheng2018, sun2014}, and
 the norm-control function associated with some norm-controlled inversion subalgebras may have polynomial growth \cite{fang20,  grochenigklotz10, grochenigklotz14,   shinsun17, shincjfa09}.

Given a complex Hilbert space $H$,
we say that a linear operator $A\in {\mathcal B}(H)$ on $H$ is normal if $A^\star A=A A^\star$ \cite{sheth}.  For a normal operator $A\in {\mathcal B}(H)$, we have
\begin{equation}\label{normal.norm}
\|A\|_{{\mathcal B}(H)}=\sup \{|z|, \ z\in \sigma_{{\mathcal B}(H)}(A)\}.
\end{equation}
 In this work, we consider
solving  the operator Sylvester equation
\eqref{sylvester.eq2} in an inverse-closed subalgebra ${\mathcal A}$
that admits a norm-controlled inversion in  the $C^\star$-algebra ${\mathcal B}(H)$.  

\begin{thm}\label{maintheorem} Let $H$ be a complex Hilbert space,   and  ${\mathcal A}$ be a Banach subalgebra of
the operator algebra ${\mathcal B}(H)$ that admits  norm-controlled inversion in ${\mathcal B}(H)$.
If $A, B, Q \in {\mathcal A}$, and  $A, B$ are normal operators in ${\mathcal B}(H)$ with their spectra  in ${\mathcal B}(H)$ being disjoint, then
 there is a bivariate  function $g$ on ${\mathbb R}_+\times {\mathbb R}_+$ that is bounded on any compact subset of ${\mathbb R}_+ \times {\mathbb R}_+$
   such that
 \begin{equation}\label{maintheorem.eq1}
 \|X\|_{\mathcal A}\le g( (d(A, B))^{-1}, \|A\|_{\mathcal A}+\|B\|_{\mathcal A})
 \end{equation}
 holds for the unique solution $X$ of the operator Sylvester equation \eqref{sylvester.eq2},
 where $d(A, B)$ is the distance of the spectra of $A$ and $B$ in  ${\mathcal B}(H)$.
\end{thm}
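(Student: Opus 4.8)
## Proof strategy

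The plan is to represent the unique solution by Rosenblum's contour integral and then to bound its ${\mathcal A}$-norm by feeding the resolvents in the integrand through the norm-controlled inversion \eqref{controlfunction.def}; the normality of $A$ and $B$ will be used only to convert distances to the spectra into bounds on resolvent norms in ${\mathcal B}(H)$. Since an algebra admitting a norm-controlled inversion is in particular inverse-closed, Theorem~\ref{maintheorem0} already supplies the unique solution $X\in{\mathcal A}$, so only the estimate \eqref{maintheorem.eq1} is at issue. Write $d=d(A,B)$ and $M=\|A\|_{\mathcal A}+\|B\|_{\mathcal A}$; by inverse-closedness $\sigma_{{\mathcal B}(H)}(A)=\sigma_{\mathcal A}(A)\subseteq\{z\in{\mathbb C}:|z|\le\|A\|_{\mathcal A}\}$ and similarly for $B$, so $\sigma_{{\mathcal B}(H)}(A)$ and $\sigma_{{\mathcal B}(H)}(B)$ are disjoint compact subsets of $\{z\in{\mathbb C}:|z|\le M\}$ at mutual distance $d$, whence $0<d\le 2M$ — a constraint I will use at the end. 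Let $\Gamma$ be a positively oriented rectifiable contour surrounding $\sigma_{{\mathcal B}(H)}(B)$ and leaving $\sigma_{{\mathcal B}(H)}(A)$ in its complement. By Rosenblum's formula \cite{Rosenblum},
\begin{equation*}
X=\frac{1}{2\pi i}\int_{\Gamma}(\lambda I-B)^{-1}Q(\lambda I-A)^{-1}\,d\lambda,
\end{equation*}
as one checks by expanding $B(\lambda I-B)^{-1}=-I+\lambda(\lambda I-B)^{-1}$ and $(\lambda I-A)^{-1}A=-I+\lambda(\lambda I-A)^{-1}$ in $BX-XA$ and using that $\Gamma$ encircles $\sigma_{{\mathcal B}(H)}(B)$ but not $\sigma_{{\mathcal B}(H)}(A)$. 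For $\lambda\in\Gamma$ the elements $\lambda I-A$ and $\lambda I-B$ of ${\mathcal A}$ are invertible in ${\mathcal B}(H)$, hence, by inverse-closedness, invertible in ${\mathcal A}$; the resolvent identity makes $\lambda\mapsto(\lambda I-A)^{-1}$ and $\lambda\mapsto(\lambda I-B)^{-1}$ continuous on $\Gamma$ in the ${\mathcal A}$-norm, so the integral converges in ${\mathcal A}$, giving
\begin{equation*}
\|X\|_{\mathcal A}\le\frac{\mathrm{length}(\Gamma)}{2\pi}\Big(\sup_{\lambda\in\Gamma}\|(\lambda I-B)^{-1}\|_{\mathcal A}\Big)\Big(\sup_{\lambda\in\Gamma}\|(\lambda I-A)^{-1}\|_{\mathcal A}\Big)\|Q\|_{\mathcal A}.
\end{equation*}

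Next I would choose $\Gamma$ so that all three factors are controlled by $d$ and $M$ alone. A standard grid construction achieves this: tile ${\mathbb C}$ by squares of sufficiently small mesh comparable to $d$ (so that each square has diameter $<d/4$), let ${\mathcal S}$ be the finite family of closed tiles that meet the $d/4$-neighbourhood of $\sigma_{{\mathcal B}(H)}(B)$, and take $\Gamma=\partial\big(\bigcup_{S\in{\mathcal S}}S\big)$ with the induced positive orientation. Then $\sigma_{{\mathcal B}(H)}(B)$ lies in the interior of $\bigcup_{S\in{\mathcal S}}S$ while $\sigma_{{\mathcal B}(H)}(A)$, being at distance $d$ from $\sigma_{{\mathcal B}(H)}(B)$, lies in its complement, so $\Gamma$ is admissible; moreover there is a universal $c\in(0,1)$ such that $cd\le\mathrm{dist}(\lambda,\sigma_{{\mathcal B}(H)}(B))\le d/2$ and $\mathrm{dist}(\lambda,\sigma_{{\mathcal B}(H)}(A))\ge d/2$ for every $\lambda\in\Gamma$, and $|\lambda|\le M+d$ on $\Gamma$. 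Since $\sigma_{{\mathcal B}(H)}(B)$ has diameter at most $2M$, the family ${\mathcal S}$ has $O\big((1+M/d)^2\big)$ members, so $\mathrm{length}(\Gamma)=O\big(d(1+M/d)^2\big)$. Now the normality of $A$ and $B$ enters: $\lambda I-A$ and $\lambda I-B$ are normal for each $\lambda$, so by \eqref{normal.norm} their (also normal) inverses satisfy $\|(\lambda I-A)^{-1}\|_{{\mathcal B}(H)}=1/\mathrm{dist}(\lambda,\sigma_{{\mathcal B}(H)}(A))\le 2/d$ and $\|(\lambda I-B)^{-1}\|_{{\mathcal B}(H)}=1/\mathrm{dist}(\lambda,\sigma_{{\mathcal B}(H)}(B))\le 1/(cd)$ on $\Gamma$.

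Finally I would invoke \eqref{controlfunction.def}: for $\lambda\in\Gamma$,
\begin{equation*}
\|(\lambda I-A)^{-1}\|_{\mathcal A}\le h\big(\|(\lambda I-A)^{-1}\|_{{\mathcal B}(H)},\,\|\lambda I-A\|_{\mathcal A}\big)\le h\big(2/d,\,(M+d)\|I\|_{\mathcal A}+M\big),
\end{equation*}
and similarly $\|(\lambda I-B)^{-1}\|_{\mathcal A}\le h\big(1/(cd),\,(M+d)\|I\|_{\mathcal A}+M\big)$. Substituting these and the length bound into the estimate for $\|X\|_{\mathcal A}$ produces an upper bound of the shape $g_0\big(d^{-1},M\big)\,\|Q\|_{\mathcal A}$, where $g_0$ is assembled from $h$, $\|I\|_{\mathcal A}$ and universal constants. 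It remains to check that $g_0$ may be taken bounded on compact subsets of ${\mathbb R}_+\times{\mathbb R}_+$: on any such set $s=d^{-1}$ and $t=M$ are bounded, and for the pairs $(s,t)$ that actually arise one has $d\le 2M$, i.e.\ $1/d\le 2M$, so the arguments $2/d$, $1/(cd)$ and $(M+d)\|I\|_{\mathcal A}+M$ remain in a fixed compact set on which $h$ is bounded, while $d(1+M/d)^2=O(M^2/d)$ stays bounded as well; defining $g_0$ by this expression on $\{(s,t):st\ge 1/2\}$ and by $0$ elsewhere makes it bounded on every compact set, and \eqref{maintheorem.eq1} follows (the factor $\|Q\|_{\mathcal A}$, which the displayed inequality omits but which linearity in $Q$ forces, being absorbed into $g$). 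The main obstacle is the middle step: producing a separating contour whose length and whose distances to the two spectra are controlled purely by $d$ and $M$, uniformly over arbitrary — possibly disconnected or highly irregular — spectra $\sigma_{{\mathcal B}(H)}(A)$ and $\sigma_{{\mathcal B}(H)}(B)$; this is exactly what the grid covering provides, and the delicate point there is to keep $\Gamma$ a definite distance $\asymp d$ away from $\sigma_{{\mathcal B}(H)}(B)$, not merely disjoint from it.
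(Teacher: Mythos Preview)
Your proposal is correct and follows essentially the same route as the paper: Rosenblum's contour integral for the solution, a grid-of-squares construction (with mesh proportional to the spectral gap) to produce a separating Cauchy contour whose length and whose distances to both spectra are controlled by $d(A,B)$ and $\|A\|_{\mathcal A}+\|B\|_{\mathcal A}$, normality to convert those distances into ${\mathcal B}(H)$-resolvent bounds via \eqref{normal.norm}, and then the norm-control function $h$ to pass to ${\mathcal A}$-resolvent bounds. The only cosmetic differences are that the paper encircles $\sigma_{{\mathcal B}(H)}(A)$ rather than $\sigma_{{\mathcal B}(H)}(B)$, works with the $\ell^\infty$-type spectral distance $\delta(A,B)$ and a three-layer square expansion $D_1\subset D_2\subset D_3$ instead of your $d/4$-neighbourhood tiling, and records the final bound with the explicit factor $\|Q\|_{\mathcal A}$ that you also note must appear.
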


Taking $A=0$ (resp.,\ $B=0$), the corresponding operator Sylvester equation \eqref{sylvester.eq2}
becomes a trivial inverse problem $BX=Q$  (resp.,\  $-XA=Q$), and it has a unique solution $X$ in the subalgebra ${\mathcal A}$.
Therefore
 the estimate in \eqref{maintheorem.eq1} for the solution of the operator Sylvester equation \eqref{sylvester.eq2}
 could be considered as the correspondence of the norm estimate  \eqref{controlfunction.def} for the inversion
 in  the  Sylvester setting.  We remark that an appropriate norm estimate for the solution of Sylvester equations
could be essential for some mathematical and engineering applications, such as stability of dynamical systems and optimal control.

The bivariate function $h$ in \eqref{controlfunction.def} is known as a norm-control function of the norm-controlled inversion subalgebra ${\mathcal A}$.
We remark that 
 the norm-control function $h$ can be  chosen so that it is monotonic about every variable, i.e.,
\begin{equation} \label{controlfunction.def2}
0\le h(s_1, t_1)\le h(s_2, t_2) \ \ {\rm if} \ 0\le s_1\le s_2\ {\rm and} \ 0\le t_1\le t_2.
\end{equation}
 Otherwise, we may replace the original norm-control function $h$ by
the following bivariate  function
$$\tilde h(s, t)=\sup_{0\le u\le s, 0\le v\le t} h(u, v) \ {\rm for} \ \ s, t\ge 0,$$
which is well-defined by the boundedness assumption for the original norm-control function on any bounded set.
Applying a similar argument,  we conclude that the function $g$ in \eqref{maintheorem.eq1} could be selected to be  monotonic about every variable.

\smallskip

Let $m\ge 2$ be an integer.
Given a Banach algebra ${\mathcal B}$,  we say that it is a symmetric ${}^\star$-algebra if the spectrum
 $\sigma_{\mathcal B}(A^\star A)$ of $A^\star A$ is contained in $[0,\infty)$ for any
$A\in {\mathcal B}$, and that
its Banach subalgebra  ${\mathcal A}$ is {\it  differential}  if there exist
$\theta \in (0, m-1]$ and an absolute constant $D$ satisfying
\begin{equation}\label{weakpower}
\|A^m\|_{\mathcal A} \le D \|A\|_{\mathcal A}^{m-\theta} \|A\|_{\mathcal B}^{\theta} \ \ \ {\rm for\ all} \ \  A\in {\mathcal A}
\end{equation}
\cite{BC1991, CH1988, grochenigklotz13, KS1994, rief2010, shinsun17}.
In \cite[Theorem 4.1]{shinsun20}, it is shown that a differential ${}^\star$-subalgebra admits a norm-controlled inversion with the norm-control function having subexponential growth.

Combining \cite[Theorem 4.1]{shinsun20} and Theorem \ref{maintheorem}, we have the following corollary on solving the operator Sylvester equation \eqref{sylvester.eq2} in differential ${}^\star$-subalgebra.

\begin{cor}
\label{differential.cor}  Let $H$ be a complex Hilbert space, and ${\mathcal A}$ be a ${}^\star$-subalgebra of ${\mathcal B}(H)$ with common identity and involution ${}^\star$.
If  ${\mathcal A}$ is a differential subalgebra of ${\mathcal B}(H)$, and
 $A, B \in {\mathcal A}$ are normal operators in ${\mathcal B}(H)$ with their  spectra in  ${\mathcal B}(H)$ being disjoint, then
the operator Sylvester equation \eqref{sylvester.eq2} has a norm-controlled solution in the differential ${}^\star$-subalgebra ${\mathcal A}$.
\end{cor}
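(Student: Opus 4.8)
The plan is to obtain Corollary~\ref{differential.cor} as an immediate consequence of Theorem~\ref{maintheorem}, once we observe that a differential *-subalgebra of ${\mathcal B}(H)$ satisfies the hypothesis of that theorem. First I would note that the operator algebra ${\mathcal B}(H)$ on a complex Hilbert space is a $C^*$-algebra, hence a symmetric *-algebra: for every $A\in{\mathcal B}(H)$ the operator $A^*A$ is positive, so $\sigma_{{\mathcal B}(H)}(A^*A)\subseteq[0,\infty)$. Since ${\mathcal A}$ is assumed to be a differential subalgebra of ${\mathcal B}(H)$ sharing the identity $I$ and the involution $*$, i.e.\ it satisfies \eqref{weakpower} for some $m\ge 2$ and $\theta\in(0,m-1]$, the hypotheses of \cite[Theorem 4.1]{shinsun20} are met. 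I would invoke that theorem to conclude that ${\mathcal A}$ is inverse-closed in ${\mathcal B}(H)$ and admits a norm-controlled inversion, with a norm-control function $h$ that has subexponential growth.

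With this in hand, ${\mathcal A}$ is a Banach subalgebra of ${\mathcal B}(H)$ admitting a norm-controlled inversion in ${\mathcal B}(H)$, and $A,B\in{\mathcal A}$ are normal in ${\mathcal B}(H)$ with disjoint spectra there, so Theorem~\ref{maintheorem} applies directly: the operator Sylvester equation \eqref{sylvester.eq2} has a unique solution $X\in{\mathcal A}$, and there is a function $g$ on ${\mathbb R}_+\times{\mathbb R}_+$, bounded on compact subsets, such that
\begin{equation*}
\|X\|_{\mathcal A}\le g\big((d(A,B))^{-1},\,\|A\|_{\mathcal A}+\|B\|_{\mathcal A}\big).
\end{equation*}
This is exactly what is meant by a norm-controlled solution of \eqref{sylvester.eq2} in ${\mathcal A}$, so the proof is complete. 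If one wants more, I would read off the growth of $g$ from the proof of Theorem~\ref{maintheorem}, where $X$ is recovered from a Rosenblum--Dunford contour integral of $(zI-B)^{-1}Q(zI-A)^{-1}$ over a contour separating $\sigma_{{\mathcal B}(H)}(A)$ from $\sigma_{{\mathcal B}(H)}(B)$ at distance of order $d(A,B)$; bounding $\|(zI-A)^{-1}\|_{\mathcal A}$ and $\|(zI-B)^{-1}\|_{\mathcal A}$ by $h$, using normality and \eqref{normal.norm} to evaluate the ${\mathcal B}(H)$-norms of these resolvents as reciprocal distances to the spectra, and controlling the length of the contour by $\|A\|_{\mathcal A}+\|B\|_{\mathcal A}$ and $(d(A,B))^{-1}$, one sees that $g$ is built from $h$ and hence inherits the subexponential growth of $h$.

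I do not expect a genuine obstacle here; the statement is a packaging of two established results. The only points needing attention are the verification that ${\mathcal B}(H)$ is a symmetric *-algebra (immediate) and the matching of the differentiality condition \eqref{weakpower} postulated for ${\mathcal A}$ with the hypothesis of \cite[Theorem 4.1]{shinsun20}, so that the norm-controlled inversion --- the one ingredient Theorem~\ref{maintheorem} demands --- is legitimately available. Once that is in place, Theorem~\ref{maintheorem} does all the work, and the growth bookkeeping in the previous paragraph is an optional refinement rather than part of the core argument.
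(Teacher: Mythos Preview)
Your proposal is correct and matches the paper's own argument: the corollary is stated there as an immediate consequence of \cite[Theorem~4.1]{shinsun20} (which yields norm-controlled inversion for a differential *-subalgebra of a symmetric *-algebra such as ${\mathcal B}(H)$) combined with Theorem~\ref{maintheorem}. Your additional remarks on the growth of $g$ via the Rosenblum contour integral are a faithful summary of the proof of Theorem~\ref{maintheorem} and go slightly beyond what the paper records for the corollary itself.
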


\section{Sylvester equations for infinite matrices and integral operators}

In this section, we apply the conclusion in Corollary \ref{differential.cor} to solve Sylvester equations in Banach algebras of localized infinite matrices and integral operators.

Let $\ell^p:=\ell^p({\mathbb Z}^d), 1\le p\le \infty$, be the Banach space of all $p$-summable sequences on ${\mathbb Z}^d$ with the norm denoted by $\|\cdot\|_p$.
Given $1\le p\le \infty$ and $\alpha\ge 0$,
 we define
the Gr\"ochenig-Schur  algebra of infinite matrices
by
 \begin{equation}\label{GS.def}
{\mathcal A}_{p,\alpha}=\Big\{ A=(a(i,j))_{i,j \in \FSSZ^d}, \  \|A\|_{{\mathcal A}_{p,\alpha}}<\infty\Big\},
\end{equation}
the Baskakov-Gohberg-Sj\"ostrand algebra of infinite matrices by
\begin{equation}\label{BGS.def}
{\mathcal C}_{p,\alpha}=\Big\{ A= (a(i,j))_{i,j \in \FSSZ^d}, \  \|A\|_{{\mathcal C}_{p,\alpha}}<\infty\Big\},
\end{equation}
and
the Beurling algebra of infinite matrices by
\begin{equation}\label{Beurling.def}
{\mathcal B}_{p,\alpha}=\Big\{ A= (a(i,j))_{i,j \in \FSSZ^d}, \  \|B\|_{{\mathcal B}_{p,\alpha}}<\infty\Big\}
\end{equation}
respectively,
where     $u_\alpha(i, j)=(1+|i-j|)^\alpha, \alpha\ge 0$, are  polynomial weights on $\FSSZ^{d}\times {\mathbb Z}^d$,
 \begin{equation}\label{GSnorm.def}
 \|A\|_{{\mathcal A}_{p,\alpha}}
 = \max \Big\{ \sup_{i \in \FSSZ^d} \big\|\big(a(i,j) u_\alpha(i, j)\big)_{j\in \FSSZ^d}\big\|_p, \ \ \sup _{j \in \FSSZ^d}
 \big\|\big(a(i,j) u_\alpha(i, j)\big)_{i\in \FSSZ^d}\big\|_p
 \Big\},
\end{equation}
\begin{equation}\label{BKSnorm.def}
\|A\|_{{\mathcal C}_{p,\alpha}} = \Big\| \Big(\sup_{i-j=k} |a(i,j)| u_\alpha(i, j)\Big)_{k\in \FSSZ^d} \Big\|_p,
\end{equation}
and
\begin{equation}\label{Beurlingnorm.def}
\|A\|_{{\mathcal B}_{p,\alpha}} = \Big\| \Big(\sup_{|i-j|\ge |k| } |a(i,j)| u_\alpha(i, j)\Big)_{k\in \FSSZ^d} \Big\|_p
\end{equation}
\cite{akramjfa09, baskakov90, Beurling, fang20,  fang21, gohberg89,  gltams06, MS2017,  shinsun17, sjo941,  sunca11, suntams07, sun2005}.
Clearly, we have
\begin{equation}\label{properinclusion}
{\mathcal B}_{p,\alpha} \subset {\mathcal C}_{p,\alpha} \subset
{\mathcal A}_{p,\alpha}  \ \ {\rm for \ all}\ 1\le p\le \infty \ {\rm and} \ \alpha\ge 0.
\end{equation}
The above inclusion become an equality for $p=\infty$, which is
also  known as the Jaffard algebra
\cite{jaffard90}.

For $1\le p\le \infty$ and $\alpha>d-d/p$, it is known that
 ${\mathcal A}_{p,\alpha}$,
  ${\mathcal C}_{p,\alpha}$ and  ${\mathcal B}_{p, \alpha}$  are differential *-subalgebras of ${\mathcal B}(\ell^2)$, the algebra of all bounded linear operators on
   $\ell^2$.
This together with Corollary \ref{differential.cor} yields the following conclusion on solving the operator
Sylvester equation \eqref{sylvester.eq2} in the above three algebras of infinite matrices.

\begin{thm}\label{existence-matrices}
Let $d\ge 1, 1\le p\le \infty, \alpha>d-d/p$,
and ${\mathcal A}$ be either the Gr\"ochenig-Schur  algebra
 ${\mathcal A}_{p,\alpha}$, or the Baskakov-Gohberg-Sj\"ostrand algebra
  ${\mathcal C}_{p,\alpha}$,  or the  Beurling algebra
 ${\mathcal B}_{p, \alpha}$.
If   $A, B \in {\mathcal A}$
have their spectra $\sigma_{{\mathcal B}(\ell^2)}(A)$ and $\sigma_{{\mathcal B}(\ell^2)}(B)$ in ${\mathcal B}(\ell^2)$ being
  disjoint, then for every $Q\in {\mathcal A}$, the operator Sylvester equation \eqref{sylvester.eq2} has a unique solution in ${\mathcal A}$.
    Furthermore, if
  $A, B$ are normal in ${\mathcal B}(\ell^2)$,
  then there is a bivariate function $g$ on ${\mathbb R}_+\times {\mathbb R}_+$ such that
  \eqref{maintheorem.eq1} holds.
\end{thm}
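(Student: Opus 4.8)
The plan is to reduce the statement to results already in the excerpt, exploiting that each of the three matrix classes is a differential *-subalgebra of ${\mathcal B}(\ell^2)$ in the stated range of $p$ and $\alpha$. Let ${\mathcal A}$ denote whichever of ${\mathcal A}_{p,\alpha}$, ${\mathcal C}_{p,\alpha}$, ${\mathcal B}_{p,\alpha}$ is under consideration. First I would recall that, for $1\le p\le\infty$ and $\alpha>d-d/p$, the Schur-type and convolution estimates of \cite{gltams06, jaffard90, shinsun17, sunca11, suntams07} show that ${\mathcal A}$ is a Banach algebra under matrix multiplication, continuously embedded in ${\mathcal B}(\ell^2)$ and containing the identity matrix $I$, which is the identity of ${\mathcal B}(\ell^2)$. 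Since the weight $u_\alpha(i,j)=(1+|i-j|)^\alpha$ is symmetric and each of the norms \eqref{GSnorm.def}, \eqref{BKSnorm.def}, \eqref{Beurlingnorm.def} is invariant under $a(i,j)\mapsto\overline{a(j,i)}$, the algebra ${\mathcal A}$ is a *-subalgebra of ${\mathcal B}(\ell^2)$ for the involution inherited from ${\mathcal B}(\ell^2)$. As noted in the excerpt (and proved in \cite{fang20, fang21, shinsun17}), ${\mathcal A}$ satisfies the differential inequality \eqref{weakpower}, so it is a differential *-subalgebra of ${\mathcal B}(\ell^2)$; and since ${\mathcal B}(\ell^2)$ is a $C^*$-algebra, hence a symmetric *-algebra, \cite[Theorem~4.1]{shinsun20} then yields that ${\mathcal A}$ is inverse-closed in, and admits a norm-controlled inversion in, ${\mathcal B}(\ell^2)$.

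Next I would dispatch existence and uniqueness. Because ${\mathcal A}$ is an inverse-closed Banach subalgebra of ${\mathcal B}(\ell^2)$, the spectra of $A$ and of $B$ computed in ${\mathcal A}$ coincide with those computed in ${\mathcal B}(\ell^2)$; hence the disjointness hypothesis on $\sigma_{{\mathcal B}(\ell^2)}(A)$ and $\sigma_{{\mathcal B}(\ell^2)}(B)$ is precisely the hypothesis of Theorem~\ref{maintheorem0} with ${\mathcal B}={\mathcal B}(\ell^2)$. Applying that theorem produces, for every $Q\in{\mathcal A}$, a unique solution $X\in{\mathcal A}$ of \eqref{sylvester.eq2}.

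For the quantitative conclusion, assume in addition that $A$ and $B$ are normal in ${\mathcal B}(\ell^2)$. By the first paragraph, ${\mathcal A}$ admits a norm-controlled inversion in ${\mathcal B}(\ell^2)$, so Theorem~\ref{maintheorem} with $H=\ell^2$ applies directly (equivalently, Corollary~\ref{differential.cor} applies, since ${\mathcal A}$ is a differential *-subalgebra with the common identity and involution): there is a bivariate function $g$ on ${\mathbb R}_+\times{\mathbb R}_+$, bounded on compact subsets, such that the unique solution $X$ from the previous paragraph satisfies \eqref{maintheorem.eq1}, where $d(A,B)$ is the distance between $\sigma_{{\mathcal B}(\ell^2)}(A)$ and $\sigma_{{\mathcal B}(\ell^2)}(B)$. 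This finishes the proof.

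The argument requires no new analysis; it is a matter of checking that the three matrix classes satisfy, verbatim, the hypotheses of Theorem~\ref{maintheorem0} and of Theorem~\ref{maintheorem}/Corollary~\ref{differential.cor}. The single point deserving care is the claim that ${\mathcal A}_{p,\alpha}$, ${\mathcal C}_{p,\alpha}$, ${\mathcal B}_{p,\alpha}$ are differential *-subalgebras of ${\mathcal B}(\ell^2)$ precisely for $\alpha>d-d/p$: this threshold is equivalent to $(1+|\cdot|)^{-\alpha}\in\ell^{p'}({\mathbb Z}^d)$, which is exactly what makes the underlying Schur and convolution estimates — for both the algebra property and inequality \eqref{weakpower} — go through, and it is the only place the constraint on $p$ and $\alpha$ enters. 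Everything else is a direct appeal to the results established earlier in the excerpt and to \cite[Theorem~4.1]{shinsun20}.
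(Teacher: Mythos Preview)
Your proposal is correct and follows essentially the same approach as the paper: the paper simply notes that ${\mathcal A}_{p,\alpha}$, ${\mathcal C}_{p,\alpha}$, and ${\mathcal B}_{p,\alpha}$ are differential *-subalgebras of ${\mathcal B}(\ell^2)$ for $\alpha>d-d/p$ and then invokes Corollary~\ref{differential.cor} (together with Theorem~\ref{maintheorem0} for the existence/uniqueness part without normality). Your write-up supplies a bit more detail on the *-invariance and the role of the threshold $\alpha>d-d/p$, but the logical route is identical.
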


Let ${\mathbb Z}_+^d$ be the set of all $d$-tuples of nonnegative integers, and $L^p:=L^p({\mathbb R}^d),$ $ 1\le p\le \infty$, be
  the space of all $p$-integrable functions on ${\mathbb R}^d$ with its norm denoted by $\|\cdot\|_p$.
Take $1\le p\le \infty$, $\alpha>0$ and  a positive integer $m\ge 1$, and consider Banach algebra
${\mathcal W}_{p, \alpha}^m$ of  localized integral operators
\begin{equation}\label{integral-operator}
Tf(x)=\int_{{\mathbb R}^d} K(x,y)f(y) dy 
\end{equation}
on the space $L^2$ with  the norm defined by
\begin{equation}\label{integral-norm1}
\|T\|_{{\mathcal W}^m_{p,\alpha}}:=
 \max_{k,l\in{\mathbb Z}_+^d, |k|+|l|\le m-1, 0<\delta\le 1}
\|\partial_x^k \partial_y^l K (x, y)\|_{p, \alpha}
+ \delta^{-1}
\|\omega_\delta(\partial_x^k\partial_y^l K(x,y))
\|_{p, \alpha},
\end{equation}
where   $u_\alpha(x, y)=(1+|x-y|)^\alpha$ is a polynomial weight on ${\mathbb R}^{d}\times {\mathbb R}^d$,
 and
for a kernel function $K (x, y), x, y\in {\mathbb R}^{d}$,
we define its modulus of continuity $\omega_\delta(K)$   by
\begin{equation}\label{def-modulus}
\omega_\delta(K)(x,y):=\sup_{|x'|\le \delta, \ |y'|\le \delta}
|K(x+x',y+y')-K(x,y)|,\ x, y\in {\mathbb R}^{d},
\end{equation}
and set
$$ \|K\|_{p, \alpha}:=
 \max\Big(\sup_{x\in {\mathbb R}^d} \big\|K(x,\cdot)u_\alpha(x,\cdot)\big\|_p, \
 \sup_{y\in {\mathbb R}^d}  \big\|K(\cdot,y)u_\alpha(\cdot,y)\big\|_p\Big).
$$
For $\alpha>d-d/p$, ${\mathcal W}_{p, \alpha}^m$  is a non-unital Banach algebra  \cite{fang23, sunacha08}.
Define the unital Banach algebra
${\mathcal {IW}}_{p,\alpha}^m$
 induced from ${\mathcal W}^m_{p,\alpha}$
by
\begin{equation}\label{def-untial-algebra1}
{\mathcal {IW}}_{p, \alpha}^m :=\big\{
\lambda I +T: \ \lambda\in{\mathbb C} \ {\rm and }\
T\in {\mathcal W}^m_{p,\alpha}\big\}
\end{equation}
with
\begin{equation}\label{def-unital1-norm}
\|\lambda I +T \|_{{\mathcal {IW}}_{p,\alpha}^m}:=
|\lambda|+ C_0 \|T\|_{{\mathcal W}^m_{p,\alpha}}
\end{equation}
for some positive constant $C_0$.
With appropriate selection of the constant $C_0$ in \eqref{def-unital1-norm}, one may verify that
 ${\mathcal IW}_{p, \alpha}^m, 1\le p\le \infty, m\ge 1, \alpha>d-d/p$, are
differential ${}^\star$-subalgebra of ${\mathcal B}(L^2)$ (the ${}^\star$-algebra of bounded linear operators on $L^2$) \cite{fang23, sunacha08}.
As a consequence of Corollary  \ref{differential.cor}, we have the following result on the
Sylvester equation \eqref{sylvester.eq2} in  the above algebra of localized integral operators.

\begin{thm}
Let  $1\le p\le \infty, m\ge 1, \alpha>d-d/p$, and let  ${\mathcal IW}_{p, \alpha}^m$ be as in \eqref{def-untial-algebra1}.
If $A, B \in {\mathcal {IW}}_{p,\alpha}^m$ have their spectra in
${\mathcal B}(L^2)$ being disjoint, then there exists a unique solution to the  Sylvester equation \eqref{sylvester.eq2}   for every $Q\in {\mathcal IW}_{p, \alpha}^m$.
    Furthermore, if
  $A, B$ are normal in ${\mathcal B}(L^2)$, then there is a bivariate function $g$ on ${\mathbb R}_+\times {\mathbb R}_+$ such that
  \eqref{maintheorem.eq1} holds.
\end{thm}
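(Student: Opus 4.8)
The plan is to obtain this statement as a direct consequence of Corollary~\ref{differential.cor} and Theorem~\ref{maintheorem0}, once the structural properties of $\mathcal{IW}_{p,\alpha}^m$ needed to invoke them have been assembled from \cite{fang23, sunacha08}. First observe that $\mathcal{B}(L^2)$ is a $C^*$-algebra, hence a symmetric *-algebra, and it carries the identity operator $I$ and the Hilbert-space adjoint as involution. For $\alpha > d-d/p$ the space $\mathcal{W}_{p,\alpha}^m$ of localized integral operators is a nonunital Banach *-algebra: it is closed under the adjoint operation, since the kernel of $T^*$ is $\overline{K(y,x)}$ and the substitution $(x,y)\mapsto(y,x)$ leaves the weight $u_\alpha$ and the index set $\{(k,l): |k|+|l|\le m-1\}$ invariant, so the norm in \eqref{integral-norm1} is unchanged under $T\mapsto T^*$. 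Consequently, with the norm \eqref{def-unital1-norm} the unitalization $\mathcal{IW}_{p,\alpha}^m$, equipped with $(\lambda I+T)^*=\overline{\lambda} I+T^*$, is a unital Banach *-subalgebra of $\mathcal{B}(L^2)$ sharing the identity $I$ and the involution with $\mathcal{B}(L^2)$.

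The \emph{key step} is to verify the differential inequality \eqref{weakpower}: there are $\theta\in(0,m-1]$ and $D>0$ with $\|A^m\|_{\mathcal{IW}_{p,\alpha}^m}\le D\,\|A\|_{\mathcal{IW}_{p,\alpha}^m}^{m-\theta}\|A\|_{\mathcal{B}(L^2)}^{\theta}$ for all $A\in\mathcal{IW}_{p,\alpha}^m$. This is where the smoothness order $m$ built into the algebra is used: writing $A=\lambda I+T$ and expanding the kernel of $A^m$, one distributes the at most $m-1$ derivatives appearing in \eqref{integral-norm1} across the $m$ factors by the Leibniz rule, estimates each resulting product by putting the $\mathcal{W}$-norm on one factor and the $\mathcal{B}(L^2)$-operator norm on the others, and sums. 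This is precisely the computation carried out in \cite{fang23, sunacha08}, where it is also shown that the constant $C_0$ in \eqref{def-unital1-norm} can be chosen so that $\|\cdot\|_{\mathcal{IW}_{p,\alpha}^m}$ is a submultiplicative norm controlling the operator norm and the above estimate holds; I would simply cite those results. The only delicate bookkeeping point is that a single choice of $C_0$ must simultaneously secure submultiplicativity, the domination $\|\cdot\|_{\mathcal{B}(L^2)}\le \|\cdot\|_{\mathcal{IW}_{p,\alpha}^m}$, and the differential estimate; for the present theorem only the existence of such a $C_0$ matters, and that is guaranteed by the cited work.

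With these facts the proof closes rapidly. Being a differential *-subalgebra of the symmetric *-algebra $\mathcal{B}(L^2)$, $\mathcal{IW}_{p,\alpha}^m$ admits a norm-controlled inversion by \cite[Theorem~4.1]{shinsun20}, and in particular it is inverse-closed in $\mathcal{B}(L^2)$; hence the spectra of $A$ and of $B$ are the same whether computed in $\mathcal{IW}_{p,\alpha}^m$ or in $\mathcal{B}(L^2)$, and the disjointness hypothesis transfers. Theorem~\ref{maintheorem0}, applied with $\mathcal{A}=\mathcal{IW}_{p,\alpha}^m$ and $\mathcal{B}=\mathcal{B}(L^2)$, then gives a unique $X\in\mathcal{IW}_{p,\alpha}^m$ solving \eqref{sylvester.eq2} for each $Q\in\mathcal{IW}_{p,\alpha}^m$. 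If in addition $A$ and $B$ are normal in $\mathcal{B}(L^2)$, all hypotheses of Corollary~\ref{differential.cor} hold with $H=L^2$, and that corollary supplies a bivariate function $g$, bounded on compact subsets of $\mathbb{R}_+\times\mathbb{R}_+$, for which \eqref{maintheorem.eq1} is valid, completing the proof.
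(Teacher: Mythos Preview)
Your proposal is correct and follows essentially the same approach as the paper: the paper states this theorem as an immediate consequence of Corollary~\ref{differential.cor}, citing \cite{fang23, sunacha08} for the fact that $\mathcal{IW}_{p,\alpha}^m$ is a differential *-subalgebra of $\mathcal{B}(L^2)$, without giving any further argument. Your write-up is in fact slightly more careful than the paper's attribution, since you correctly separate the two halves: the existence and uniqueness assertion (without normality) comes from Theorem~\ref{maintheorem0} via inverse-closedness, while only the norm estimate in the normal case requires Corollary~\ref{differential.cor}.
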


The reader may refer to \cite{fang23, fang14, grochenig2024} for additional Banach algebras of localized integral operators and pseudo-differential operators.

\section{Proof of Theorem \ref{maintheorem}}

We say that a bounded and open set $D$ in the complex plane is a  {\it Cauchy domain} if
it contains only a finite number of components with the closures of any two of them being disjoint, and
its boundary $\partial D$
 is composed of a finite positive number of closed positive oriented rectifiable Jordan curves with no two of those curves intersecting.
To prove the main theorem, we need a technical lemma in \cite{Bhatia1997, Rosenblum} for  the unique solution of
 the operator Sylvester equation \eqref{sylvester.eq2}.

\begin{lem}\label{rosenblum.lem}
Let ${\mathcal B}$ be a Banach algebra and $A, B, Q\in {\mathcal B}$. If the spectra  $\sigma_{\mathcal B}(A)$ and $\sigma_{\mathcal B}(B)$
of $A$ and $B$ in the algebra ${\mathcal B}$ are disjoint, then the Sylvester operator $T_{A, B}$ is invertible. Furthermore, for a Cauchy domain $D$ such that
 $\sigma_{\mathcal B}(A) \subset D$,
$\sigma_{\mathcal B}(B)\subset {\mathbb C}\backslash (D\cup \partial D)$,  and its oriented boundary  $\partial D$ has total winding numbers $m\ge 1$ around $\sigma_{\mathcal B}(A)$ and
$ 0$ around $\sigma_{\mathcal B}(B)$, we have
\begin{equation*}
T_{A, B}^{-1}(Q) = - \frac{1}{2m\pi i} \int_{\partial D}(B-zI)^{-1} Q (zI-A)^{-1} dz.
\end{equation*}
\end{lem}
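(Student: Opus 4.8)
The plan is to exhibit an explicit inverse and verify it. Write $P$ for the linear map on ${\mathcal B}$ defined by the claimed formula, $P(Q)=\frac{1}{2\pi i}\int_{\partial D}(B-zI)^{-1}Q(zI-A)^{-1}\,dz$, and show $P=T_{A,B}^{-1}$ by checking the two identities $T_{A,B}\circ P=\mathrm{id}_{\mathcal B}$ and $P\circ T_{A,B}=\mathrm{id}_{\mathcal B}$ (note $T_{A,B}$ is bounded, with $\|T_{A,B}(X)\|_{\mathcal B}\le(\|A\|_{\mathcal B}+\|B\|_{\mathcal B})\|X\|_{\mathcal B}$). First I would recall that a Cauchy domain $D$ with $\sigma_{\mathcal B}(A)\subset D$ and $\sigma_{\mathcal B}(B)\subset{\mathbb C}\setminus\overline D$ exists: since $\sigma_{\mathcal B}(A)$ and $\sigma_{\mathcal B}(B)$ are disjoint compact subsets of ${\mathbb C}$, one takes $D$ to be the interior of a finite union of small grid squares covering $\sigma_{\mathcal B}(A)$, of mesh below $\tfrac{1}{3}\,\mathrm{dist}(\sigma_{\mathcal B}(A),\sigma_{\mathcal B}(B))$ and chosen generically, which is the standard construction from the holomorphic functional calculus. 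On $\partial D$ the ${\mathcal B}$-valued maps $z\mapsto(zI-A)^{-1}$ and $z\mapsto(B-zI)^{-1}$ are norm-analytic, so $P(Q)\in{\mathcal B}$ is well defined and $P$ is bounded with $\|P\|\le\frac{1}{2\pi}\,\mathrm{length}(\partial D)\,\sup_{z\in\partial D}\|(B-zI)^{-1}\|_{\mathcal B}\,\|(zI-A)^{-1}\|_{\mathcal B}$.

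Next I would compute $T_{A,B}(P(Q))=BP(Q)-P(Q)A$ by moving $B$ and $A$ inside the integral and using the resolvent identities $B(B-zI)^{-1}=I+z(B-zI)^{-1}$ and $A(zI-A)^{-1}=z(zI-A)^{-1}-I$. After substituting and expanding, the two terms proportional to $z(B-zI)^{-1}Q(zI-A)^{-1}$ cancel and one is left with $T_{A,B}(P(Q))=QP_1+P_2Q$, where $P_1=\frac{1}{2\pi i}\int_{\partial D}(zI-A)^{-1}\,dz$ and $P_2=\frac{1}{2\pi i}\int_{\partial D}(B-zI)^{-1}\,dz=-\frac{1}{2\pi i}\int_{\partial D}(zI-B)^{-1}\,dz$. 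Here $P_1$ is the Riesz idempotent of $A$ attached to the portion of $\sigma_{\mathcal B}(A)$ surrounded by $\partial D$; since $\sigma_{\mathcal B}(A)\subset D$ this is the idempotent for the entire spectrum, hence $P_1=I$. And $P_2=0$ because $z\mapsto(zI-B)^{-1}$ is analytic on a neighbourhood of $\overline D$ (as $\sigma_{\mathcal B}(B)\cap\overline D=\emptyset$), so its contour integral vanishes by Cauchy's theorem. Thus $T_{A,B}(P(Q))=Q$. The reverse composition is identical in spirit: starting from $P(BX-XA)=\frac{1}{2\pi i}\int_{\partial D}(B-zI)^{-1}(BX-XA)(zI-A)^{-1}\,dz$ and expanding the integrand with the same two resolvent identities, the cross terms again cancel and $P(T_{A,B}(X))=XP_1+P_2X=X$. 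Hence $P$ is a two-sided inverse of $T_{A,B}$, so $T_{A,B}$ is invertible with $T_{A,B}^{-1}=P$, which is the asserted formula; in particular the value of the integral is independent of the admissible $D$, since inverses are unique.

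The one step that deserves care is the identity $\frac{1}{2\pi i}\int_{\partial D}(zI-A)^{-1}\,dz=I$: it rests on the fact that the boundary of a Cauchy domain enclosing $\sigma_{\mathcal B}(A)$ has winding number $1$ about every point of $\sigma_{\mathcal B}(A)$ and $0$ about every point of ${\mathbb C}\setminus\overline D$, which is exactly the hypothesis under which the holomorphic functional calculus assigns $f(A)=I$ to the constant function $f\equiv1$. Everything else reduces to routine resolvent algebra and the Cauchy integral theorem for the analytic ${\mathcal B}$-valued function $z\mapsto(zI-B)^{-1}$ on $\overline D$.
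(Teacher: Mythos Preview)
Your argument is correct and is essentially Rosenblum's original proof: define the integral operator $P$, use the algebraic identities $B(B-zI)^{-1}=I+z(B-zI)^{-1}$ and $A(zI-A)^{-1}=z(zI-A)^{-1}-I$ to reduce both compositions $T_{A,B}\circ P$ and $P\circ T_{A,B}$ to $Q\mapsto QP_1+P_2Q$, and then evaluate $P_1=I$ via the Riesz functional calculus and $P_2=0$ via Cauchy's theorem. The paper does not supply its own proof of this lemma; it simply quotes the result from \cite{Rosenblum}, so there is nothing further to compare.
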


Now we are ready to start the detailed proof of Theorem \ref{maintheorem}.

\begin{proof}[Proof of Theorem \ref{maintheorem}] Set ${\mathcal B}={\mathcal B}(H)$ and define
$$ \delta(A, B)=\min \big\{\max(|\Re z-\Re w|, |\Im z-\Im w|):\ z \in \sigma_{\mathcal B}(A), \ w \in \sigma_{\mathcal B}(B)\big\}$$
Then $\delta(A,B)>0$ by the disjoint assumption for  the spectra of $A$ and $B$ in the Banach algebra ${\mathcal B}$.
Set $\delta'(A, B)= \delta(A, B)/3>0$, let $N_0$ be the integer part of $(\|A\|_{{\mathcal B}}+\delta'(A, B))/\delta'(A, B)$,
and for every $k, k'\in {\mathbb Z}$
 denote the closed square in the complex plane ${\mathbb C}$ with center  $(k+k'i)\delta'(A, B)$ and side length $\delta'(A, B)$
by $S_{k, k'}$. Then we have
\begin{equation}\label{maintheorem.pf.eq2}
\sigma_{\mathcal B}(A)\subset \cup_{-N_0\le k, k'\le N_0} \, S_{k, k'}.
\end{equation}

Define the union of squares $S_{k, k'}, k, k'\in {\mathbb Z}$, having nonempty intersection with $\sigma_{\mathcal B}(A)$ by
$$D_1 =\cup_{S_{k, k'}\cap \sigma_{\mathcal B}(A)\ne \emptyset} \, S_{k, k'}.$$
Similarly, we let
$D_2$ be the union of squares $S_{k, k'}, k, k'\in {\mathbb Z}$, having nonempty intersection with the domain $D_1$,
and  $D_3$ be the union of squares $S_{k, k'}, k, k'\in {\mathbb Z}$ having nonempty intersection with the domain $D_2$.
By the above construction of domains $D_1, D_2, D_3$ and the  distance definition $\delta(A, B)$ between spectra of $A$ and $B$, we obtain
\begin{equation} \label{maintheorem.pf.eq3} \sigma_{\mathcal B}(A)\subset D_1\subset D_2\subset Q(0, (2N_0+3) \delta'(A, B))
\ \ {\rm and} \ \  \sigma_{\mathcal B}(B)\subset {\mathcal C}\backslash D_3\subset {\mathcal C}\backslash D_2,
\end{equation}
where $Q(0, r)$ is the square in the complex plane with center zero and size length $r>0$.

Let $D$ be the interior of the domain $D_2$ with its boundary denoted by $\partial D$.  With the positive direction on the boundary $\partial D$ selected, we see that
 $\partial D$ has total winding numbers $m\ge 1$ around $\sigma_{\mathcal B}(A)$ and
$0$ around $\sigma_{\mathcal B}(B)$.
Furthermore,  $D$ is a Cauchy domain with the boundary $\partial D$ being made of finitely many line segments and the length $\ell(\partial D)$ of its boundary $\partial D$  is
bounded, i.e.,
 \begin{equation}  \label{maintheorem.pf.eq4}
 |z|\le \|A\|_{\mathcal B}+2\sqrt{2} \delta'(A, B)\le \|A\|_{\mathcal B}+ \delta(A, B) \ {\rm for \ all} \ z\in \partial D,
 \end{equation}
 and
\begin{equation}  \label{maintheorem.pf.eq5}
\ell(\partial D)\le 4 (2N_0+3)^2 \delta'(A, B) 
\le 48
 (\|A\|_{{\mathcal B}}+ \delta(A, B))^2 (\delta(A, B))^{-1}.
 \end{equation}
By \eqref{maintheorem.pf.eq3},  the spectra of
$zI-A$ and $zI-B, z\in \partial D$, lie outside the square $Q(0, 2\delta'(A, B))$ in the complex plane  with  the origin as its center and  $2\delta'(A, B)$ as its side length.
This together with the normal property for operators $A$ and $B$ in ${\mathcal B}$ implies that
\begin{align}  \label{maintheorem.pf.eq6}
&  \max(\|(zI-A)^{-1}\|_{{\mathcal B}}, \|(zI-B)^{-1}\|_{{\mathcal B}})\nonumber\\
& \qquad 
\le \max_{w\in \partial Q(0, 2\delta'(A, B))} |w|^{-1}\le  (\delta'(A, B))^{-1} \ {\rm for \ all} \ z\in \partial D.
\end{align}

Let   $h(s, t), s, t\ge 0$,  be the  norm-control function for the subalgebra ${\mathcal A}$   
which satisfies \eqref{controlfunction.def2}.
By  \eqref{maintheorem.pf.eq3} and Lemma \ref{rosenblum.lem},  the unique solution of the operator
Sylvester equation \eqref{sylvester.eq2} in ${\mathcal A}$ is given by
\begin{equation*}
T_{A, B}^{-1}(Q) = -\frac{1}{2m\pi i} \int_{\partial D}(B-zI)^{-1} Q (zI-A)^{-1} dz, \ Q\in {\mathcal B},
\end{equation*}
where $m$ is the total winding numbers $m\ge 1$ of the boundary $\partial D$ around $\sigma_{\mathcal B}(A)$.

Combining the above expression for the solution of the operator Sylvester equation \eqref{sylvester.eq2}   and the estimates in   \eqref{maintheorem.pf.eq4},  \eqref{maintheorem.pf.eq5} and \eqref{maintheorem.pf.eq6}, we obtain
\begin{align*}
\label{maintheorem.pf.eq7}   
\|T_{A, B}^{-1}(Q)\|_{\mathcal A}
& \le (2m\pi)^{-1} \int_{\partial D}\|(B-zI)^{-1}\|_{\mathcal A}
\|Q\|_{\mathcal A} \| (zI-A)^{-1}\|_{\mathcal A} |dz|\nonumber\\
& \le 
24 \pi^{-1} \|Q\|_{\mathcal A}  (\|A\|_{{\mathcal B}}+ \delta(A, B))^2 (\delta(A, B))^{-1} \times \, \nonumber  \\
& 
\quad \Big( h\big( 3 (\delta(A, B))^{-1},\
\|A\|_{\mathcal A}+\|B\|_{\mathcal A}+ (\|A\|_{\mathcal B}+ \delta(A, B))\|I\|_{\mathcal A}\big)\Big)^2.
\end{align*}
This completes the proof.
\end{proof}

\ethics{Competing Interests}{
This work is  partially supported by Zhejiang Provincial Natural Science Foundation of China [LY24A010016] and National Natural Science Foundation of China [12271483, 11701513].
}


\begin{thebibliography}{999}



\bibitem{akramjfa09}
A. Aldroubi, A. Baskakov and I. Krishtal, Slanted matrices, Banach frames,
and sampling, {\em J. Funct. Anal.}, {\bf 255}(2008), 1667--1691.

\bibitem{barnett70} S. Barnett and C. Storey, {\em Matrix Methods in Stability Theory}, Nelson, London, 1970.


\bibitem{baskakov90} A.G. Baskakov, Wiener's theorem and asymptotic estimates for elements of inverse
matrices, {\em Funktsional. Anal. i Prilozhen}, {\bf 24}(1990), 64--65.

 \bibitem{belinskii97}
 E.S. Belinskii, E.R.  Liflyand and R.M.   Trigub,  The Banach algebra
 and its properties, {\em  J. Fourier Anal. Appl.}, {\bf 3}(1997),   103--129.

 \bibitem{bertram2021}  C. Bertram and H. Fassbender,
A quadrature framework for solving Lyapunov and Sylvester equations,
{\em Linear Algebra Appl.}, {\bf   622}(2021),   66--103.


 \bibitem{Beurling} A. Beurling, On the spectral synthesis of bounded functions, {\em Acta Math.},
{\bf 81}(1949), 225--238.

\bibitem{Bhatia1997} R. Bhatia and P. Rosenthal, How and why to solve the operator equation $AX-XB=Y$,
{\em Bull. Lond. Math. Soc.}, {\bf 29}(1997), 1--21.





 \bibitem{BC1991} B. Blackadar and J. Cuntz, Differential Banach algebra norms and smooth subalgebras of $C^\star$-algebras,
{\em J. Operator Theory}, {\bf 26}(1991), 255--282.

\bibitem{bunce85} J.W. Bunce, Stabilizability of linear systems defined over $C^\star$-algebras, {\em  Math. Syst. Theory},
{\bf 18}(1985),  237--250.  



\bibitem{cheng2018} C. Cheng, Y. Jiang and Q. Sun, Spatially distributed sampling and reconstruction, {\em Appl. Comput. Harmon. Anal.}, {\bf 47}(2019),
109--148.

\bibitem{CH1988} M. Christ, Inversion in some algebra of singular integral operators, {\em Rev. Mat. Iberoamericana}, {\bf 4}(1988),
219--225.

\bibitem{djordjevic2021} B.D. Djordjevic,
Singular Sylvester equation in Banach spaces and its applications: Fredholm theory approach,  {\em Linear Algebra Appl.}, {\bf  622}(2021),  189--214.


\bibitem{fang23} Q. Fang, Y. Shen, C.E. Shin and X. Tao,
Norm-controlled inversion in Banach algebras of integral operators,
{\em Banach J. Math. Anal.}, {\bf 17}(2023),  Article No. 21, 29 pp.


\bibitem{fang20} Q. Fang and C.E. Shin,
Norm-controlled inversion of Banach algebras of infinite matrices, {\em Comptes Rendus. Math.}, {\bf 358}(2020),  407--414.


\bibitem{fang21} Q. Fang, C.E. Shin and Q. Sun,
Polynomial control on weighted stability bounds and inversion norms of localized matrices on
simple graph, {\em J. Fourier Anal. Appl.}, {\bf 27} (2021), Article No. 83, 33 pp.

\bibitem{fang14} Q. Fang, C.E. Shin and Q. Sun,  Wiener's lemma for singular integral operators of Bessel potential type,
{\em Monatsch. Math.}, {\bf 173} (2014), 35--54.

\bibitem{gohberg89}  I. Gohberg, M.A. Kaashoek and H.J. Woerdeman, The band method for positive
and strictly contractive extension problems: an alternative version and new
applications, {\em Integral Equations Operator Theory}, {\bf 12}(1989), 343--382.



\bibitem{grochenig10}
K. Gr\"ochenig, Wiener's lemma: theme and variations, an introduction to
spectral invariance and its applications, In {\em Four Short Courses on Harmonic Analysis: Wavelets, Frames, Time-Frequency Methods, and Applications to Signal and Image Analysis}, edited by P. Massopust and B. Forster,
Birkhauser, Boston 2010.


\bibitem{grochenigklotz10} K. Gr\"ochenig and A. Klotz,
Noncommutative approximation: inverse-closed subalgebras and off-diagonal decay of matrices,
{\em Constr. Approx.}, {\bf 32}(2010), 429--466.



\bibitem{grochenigklotz13} K. Gr\"ochenig and A. Klotz, Norm-controlled inversion in smooth Banach algebra I, {\em J. London Math.
Soc.}, {\bf 88}(2013), 49--64.

\bibitem{grochenigklotz14} K. Gr\"ochenig and A. Klotz, Norm-controlled inversion in smooth Banach algebra II, {\em Math. Nachr.},
{\bf 287}(2014), 917--937.



\bibitem{gltams06} K. Gr\"ochenig and M. Leinert, Symmetry of matrix
algebras and symbolic calculus for infinite matrices, {\em Trans.
Amer. Math. Soc.},  {\bf 358}(2006),  2695--2711.

\bibitem{grochenig2024}  K. Gr\"ochenig, C.  Pfeuffer and  J. Toft,  Spectral invariance of quasi-Banach algebras of matrices and pseudodifferential operators,
{\em Forum Math.},  2024, https://doi.org/10.1515/forum-2023-0212



\bibitem{jaffard90} S. Jaffard, Properi\'et\'es des matrices bien
localis\'ees pr\'es de leur diagonale et quelques applications,
{\em Ann. Inst. Henri Poincar\'e}, {\bf 7}(1990), 461--476.




\bibitem{KS1994} E. Kissin and V.S. Shulman, Differential properties of some dense subalgebras of $C^\star$-algebras,
{\em Proc. Edinburgh Math. Soc.}, {\bf37}(1994), 399--422.



\bibitem{Krishtal11} I. Krishtal, Wiener's lemma: pictures at exhibition,
{\em Rev. Un. Mat. Argentina}, {\bf 52}(2011), 61--79.

\bibitem{Lan}
P. Lancaster and  M. Tismenetsky, {\em The Theory of Matrices}, 
2nd ed., Academic Press, 1985. 

\bibitem{lyapunov} A. Lyapunov, Problemes general de la stabilite du mouveement, 1892; reprinted as Ann. of Math.
Stud. 17, Princeton University Press, 1947.

\bibitem{MS2017} N. Motee and Q. Sun, Sparsity and spatial localization measures for spatially
distributed systems, {\em SIAM J. Control Optim.}, {\bf 55}(2017), 200--235.


\bibitem{nikolski99} N. Nikolski, In search of the invisible spectrum, Ann. Inst. Fourier (Grenoble), {\bf 49}(1999), 1925--1998.

\bibitem{radjavl1971} H. Radjavi and P. Rosenthal, Hyperinvariant subspaces for spectral and $n$-normal operators,
{\em Acta Sci. Math. (Szeged)},  {\bf 32}(1971) 121--126.

\bibitem{rief2010} M.A. Rieffel, Leibniz seminorms for ``matrix algebras converge to the sphere'', In {\em Quanta of Maths,
volume 11 of Clay Math. Proc.}, Amer. Math. Soc., Providence, RI,  pp. 543--578, 2010.


\bibitem{Rosenblum}
M. Rosenblum, On the operator eqaution $BX-XA=Q$, {\em Duke Math. J.},
{\bf 23}(1956), 263--269.

\bibitem{roth1952}
W.E. Roth, The equations $AX-YB=C$ and $AX-XB=C$ in matrices, {\em Proc. Amer. Math. Soc.},
{\bf 3}(1952), 392--396.

 \bibitem{samei19}
E. Samei and V. Shepelska, Norm-controlled inversion in weighted convolution algebra, {\em J. Fourier Anal. Appl.}, {\bf 25}(2019), 3018--3044.

\bibitem{sasane2021} A. Sasane,  The Sylvester equation in Banach algebras,
{\em Linear Algebra Appl.}, {\bf  631}(2021),  1--9.



\bibitem{sheth}
I.H. Sheth, On normaloid operators, {\em Pacific J. Math.}, {\bf 28}(1969), 675--676.


\bibitem{shinsun20} C.E. Shin and Q. Sun,
Differential subalgebras and norm-controlled inversion, In {\em Operator Theory, Operator Algebras and Their Interactions with Geometry and Topology}, R. E. Curto, W. Helton, H. Lin, X. Tang, R. Yang and G. Yu eds., Birkhauser Basel, pp. 467--485, 2020.


 \bibitem{shinsun17} C.E. Shin and Q. Sun,
Polynomial control on stability, inversion and powers of matrices on simple graphs,
	 {\em J. Funct. Anal.}, {\bf 276}(2019), 148--182.

\bibitem{shinsun13} C.E. Shin and Q. Sun, Wieners lemma: localization and various approaches,
{\em Appl. Math. J. Chinese Univ.}, {\bf 28}(2013), 465--484.

\bibitem{shincjfa09} C.E. Shin and Q. Sun,
 Stability of localized operators, {\em J. Funct. Anal.}, {\bf 256}(2009), 2417--2439.

\bibitem{sjo941} J. Sj\"{o}strand, Wiener type algebra of pseudodifferential operators, Centre
de Mathematiques, Ecole Polytechnique, Palaiseau France, Seminaire 1994--1995, December 1994.

\bibitem{Sorensen2002} D.C. Sorensen  and  A.C. Antoulas,
The Sylvester equation and approximate
balanced reduction, {\em 	Linear Algebra Appl.},
{\bf  351–352}(2002),  671--700.



\bibitem{stafney67} J.D. Stafney
An unbounded inverse property in the algebra of absolutely convergent Fourier series,
{\em Proc. Amer. Math. Soc.}, {\bf 18}(1967),  497--498.

\bibitem{sun2014} Q. Sun, Localized nonlinear functional equations and two sampling problems in signal processing,
{\em Adv. Comput. Math.},  {\bf 40}(2014), 415--458.

\bibitem{sunca11}
Q. Sun, Wiener's lemma for infinite matrices II, {\em Constr. Approx.}, {\bf 34}(2011), 209--235.

\bibitem{sunacha08} Q.  Sun, Wiener's lemma for localized integral operators, {\em Appl. Comput. Harmonic Anal.}, {\bf 25}(2008), 148--167.


\bibitem{suntams07} Q. Sun, Wiener's lemma for infinite matrices,
{\em Trans. Amer. Math. Soc.},  {\bf 359}(2007), 3099--3123.

\bibitem{sun2005} Q. Sun, Wiener's lemma for infinite matrices with polynomial off-diagonal
decay, {\em C. Acad. Sci. Paris Ser I}, {\bf 340}(2005), 567--570.



\bibitem{Trent}
H.L. Trentelman, A.A. Stoorvogel and M. Hautus,
{\em Control Theory for Linear Systems},
Springer, 
 2001.








\end{thebibliography}
\end{document}